\documentclass[11pt]{article}

\usepackage[utf8]{inputenc}
\usepackage[T1]{fontenc}
\usepackage{lmodern}
\usepackage{microtype}

\usepackage[margin=1.15in]{geometry}

\usepackage{amsmath,amssymb,amsthm,mathtools}

\usepackage[colorlinks=true,
            linkcolor=blue,
            citecolor=blue,
            urlcolor=blue]{hyperref}
\usepackage[nameinlink,capitalise]{cleveref}
\let\oldeqref\eqref
\renewcommand{\eqref}[1]{\begingroup\hypersetup{linkcolor=black}\oldeqref{#1}\endgroup}
\usepackage{booktabs}
\usepackage{enumitem}

\theoremstyle{plain}
\newtheorem{theorem}{Theorem}[section]

\newtheorem{lemma}[theorem]{Lemma}

\theoremstyle{definition}

\theoremstyle{remark}

\title{Secretary Problem Thresholds and Convergents of $1/e$}

\author{Raúl Sánchez Galán}

\date{\today}

\begin{document}

\maketitle

\begin{abstract}
We prove that if $p/q$ is a continued fraction convergent of $1/e$ with
$q\geq 3$, then, for the secretary problem with $q$ applicants, the optimal
number of initially rejected applicants is $p$.
\end{abstract}

\section{Introduction}

The classical secretary problem is one of the best-known examples in optimal
stopping theory; see, for instance, Gilbert and Mosteller~\cite{GilbertMosteller1966}.
In its standard form, an employer interviews $n$ applicants in a uniformly
random order. The applicants can be ranked unambiguously from best to worst, but
the employer only learns the relative rank of each applicant among those already
interviewed. After each interview, the employer must either accept or reject the
applicant, and this decision is irrevocable. The goal is to maximize the
probability of selecting the best applicant.

A remarkable feature of the problem is that the optimal rule has a very simple
threshold form. One rejects a certain number $r(n)$ of applicants, observing them
only in order to establish a standard, and then accepts the first subsequent
applicant who is better than all applicants seen so far. The associated threshold sequence is recorded in the OEIS as A054404~\cite{OEISA054404}. 
For large $n$, the
optimal value $r(n)$ is asymptotic to $n/e$, and the corresponding probability
of success tends to $1/e$. Thus, Euler's number enters the problem already at
the level of its first-order asymptotics.

The purpose of this paper is to prove a more arithmetically precise connection between the secretary problem and Euler's number. Havil observed in \cite[Sec.~13.13]{Havil2003} that the convergents of the continued fraction of $e^{-1}$ appear to encode exact secretary thresholds: if $p/q$ is such a convergent, then $p$ appears to be the optimal threshold for the secretary problem with $q$ applicants. Equivalently, these convergents appear to be fractions of the form \begin{equation*} \frac{r(n)}{n}. \end{equation*} This phenomenon was also mentioned in~\cite{BayonEtAl2019}, where it was noted that the relation remained open.

Our main result is a short and elementary proof of this observation, summarised in Theorem \ref{main_result}.

We begin by recalling the classical secretary problem; we then review the necessary facts about continued fractions and the continued fraction of $1/e$. The proof of the main
theorem is given in the final section.

\section{The secretary problem}

We consider the following standard version of the secretary or Sultan's dowry problem \cite{GilbertMosteller1966}, \cite{WeissteinSultansDowry}. There are
$n$ applicants to fill a single secretary position. The applicants are interviewed sequentially in random order, with each order being equally likely. The applicants
have a strict total ranking, but after interviewing an applicant, one only knows
the applicant's rank relative to those already interviewed. Immediately after
each interview, the applicant must either be accepted or rejected, and rejected
applicants cannot be recalled. The objective is to maximize the probability of selecting the best applicant overall.

The problem became widely known after Fox and Marnie posed a mathematically
equivalent problem in \cite{FoxMarnie1960}.

The optimal strategy has a threshold form: an applicant who is not the best among those already observed cannot be the best overall, and therefore should never be accepted. Thus only applicants with highest observed rank, i.e., 
\emph{candidates}, need to be considered. If the $i$-th applicant is a candidate, then
the probability that this applicant is the best among all $n$ applicants is
$i/n$, which is increasing in $i$. However, the probability of succeeding by rejecting the present candidate and continuing decreases with $i$, since fewer applicants remain.
Therefore, there is a threshold point: before
it, one rejects even candidates; after it, one accepts the first candidate.

The rule with threshold $0\leq r\leq n-1$ rejects the first
$r$ applicants unconditionally, and then accepts the first applicant who is
better than all previously seen applicants. If no such candidate appears, the last applicant is selected.

Let $P(n,r)$ denote the probability of success for this strategy. For $r=0$, we accept the first applicant, and hence by the uniformity assumption
\begin{equation*}
P(n,0)=1/n.    
\end{equation*}
For $1\leq r\leq n-1$, consider the following events,
\begin{align*}
A_i &= \text{applicant $i$  is selected} \\
B_i &= \text{applicant $i$ has the highest rank overall.}
\end{align*} 
Then 
\begin{equation*}
    P(n,r) = \sum_{i=1}^n  P(A_i \cap B_i) = \sum_{i=1}^n  P(B_i)P(A_i|B_i).
\end{equation*}

Since each order is equally likely, $P(B_i) = \frac{1}{n}$. Additionally,
\begin{equation*}
P(A_i| B_i) = \begin{cases}
			0, & \text{if $i\leq r$}\\
            \frac{r}{i-1}, & \text{if $i > r$}
		 \end{cases}.
\end{equation*}
Indeed, if $i \leq r$, following our strategy the $i$-th applicant will never be selected. 
If $i>r$ and we assume that the $i$-th applicant is the best one, then it will be selected precisely when the previous candidates, in the sense of record applicants, are in the first $r$ positions. Since the relative order of the first $i-1$ applicants is
uniform, this has probability $r/(i-1)$.

Therefore,
\begin{equation}\label{eq:Prob_success}
P(n,r)
=
\sum_{i=r+1}^{n}\frac1n\frac{r}{i-1}
=
\frac{r}{n}\sum_{j=r}^{n-1}\frac1j.
\end{equation}

The optimal threshold can be characterized exactly by comparing consecutive
values of $P(n,r)$. For $1\leq r\leq n-2$,
\begin{equation*}
P(n,r+1)-P(n,r) =
\frac{1}{n}\left(\sum_{j=r+1}^{n-1}\frac1j-1\right).
\end{equation*}
So $P(n,r)$ is increasing with $r$ as long as 
\begin{equation*}
\sum_{j=r+1}^{n-1}\frac1j>1,
\end{equation*}
and then decreases once this sum is at most $1$. 
Consequently, the optimal number
$r(n)$ of initially rejected applicants is characterized by
\begin{equation}\label{eq:criterion}
\sum_{j=r(n)+1}^{n-1}\frac1j\leq 1
\leq
\sum_{j=r(n)}^{n-1}\frac1j.
\end{equation}
The cases $n=1$ and $n=2$ are exceptional: for $n=1$ the only threshold is $r=0$, while for $n=2$ both $r=0$ and $r=1$ are optimal. If the right-hand inequality is strict, the optimum is unique. In fact, except for $n=2$, both inequalities are strict, since it is well-known that the sum of reciprocals of consecutive positive integers cannot be an integer unless the sum consists only of the single term $1$. 
This criterion is the form of the secretary problem that will be used in the proof of the main theorem.

The classical asymptotic rule follows from replacing the harmonic sum by a logarithm. If $r$ and $n$ are both large, then the probability of succeeding \eqref{eq:Prob_success} becomes,
\begin{equation*}
P(n,r)
\sim
\frac{r}{n}\ln\left(\frac{n}{r}\right).
\end{equation*}
Writing $x=r/n$, the asymptotic-limiting function is $
x\ln\left(\frac1x\right)$,
which is maximized at $x=1/e$.
Therefore, in the asymptotic limit, the optimal fraction of initially rejected
applicants tends to $1/e$, that is, $r(n)\sim \frac{n}{e}$, and the optimal probability of success also tends to $1/e$.

\section{Continued fractions}

We recall the basic facts about continued fractions needed to prove the main theorem in the next section. Standard
references include \cite{Khinchin1964,HardyWright2008,Sierpinski}. 

Every irrational real number $x$ has a unique
infinite simple continued fraction expansion
\begin{equation*}
x=[a_0;a_1,a_2,a_3,\dots] := a_0+\cfrac{1}{a_1+\cfrac{1}{a_2+\cfrac{1}{a_3+\cdots}}},
\end{equation*}
where $a_0\in\mathbb Z$ and $a_i\in\mathbb Z_{>0}$ for $i\geq 1$. The
$k$-th convergent of $x$ is the rational number
\begin{equation*}
\frac{p_k}{q_k}
=
[a_0;a_1,\dots,a_k].
\end{equation*}
The numerators and denominators of the convergents satisfy the recurrence
relations
\begin{equation}\label{eq:recurrence}
p_k=a_kp_{k-1}+p_{k-2},
\qquad
q_k=a_kq_{k-1}+q_{k-2},
\end{equation}
with initial conditions $p_{-2}=0$, $p_{-1}=1$,
$q_{-2}=1$, $q_{-1}=0$.

They also satisfy
\begin{equation*}
p_kq_{k-1}-p_{k-1}q_k=(-1)^{k-1},
\end{equation*}
and therefore $p_k/q_k$ is always in lowest terms.

A central property of convergents is that they provide very good rational
approximations. If $p_k/q_k$ and $p_{k+1}/q_{k+1}$ are consecutive convergents
of $x$, then
\begin{equation}\label{eq:rationa_approx}
\left|x-\frac{p_k}{q_k}\right|
<
\frac{1}{q_kq_{k+1}}.
\end{equation}

We shall apply these facts to $x=1/e$. Euler first showed the continued fraction expansion for $e$ in 1737 and published it in~\cite{Euler1744}. For a modern and accessible proof, we refer to~\cite{Cohn2006}. The expansion is
\begin{equation*}
e=[2;1,2,1,1,4,1,1,6,1,1,8,\dots],
\end{equation*}
and therefore,
\begin{equation*}
\frac1e
=
[0;2,1,2,1,1,4,1,1,6,1,1,8,\dots].
\end{equation*}
Thus the first convergents of $1/e$ are
\begin{equation*}
0,\quad
\frac12,\quad
\frac13,\quad
\frac38,\quad
\frac4{11},\quad
\frac7{19},\quad
\frac{32}{87},\quad
\frac{39}{106},\quad
\frac{71}{193},\quad
\frac{465}{1264},\dots.
\end{equation*}

\section{Convergents of \texorpdfstring{$1/e$}{1/e} and the secretary problem}

We shall use the following two elementary estimates.
\begin{lemma}
For every integer $j\geq 1$,
\begin{equation}
\frac1j<\ln\left(\frac{j+\frac13}{j-\frac23}\right),
\qquad
\frac1j>\ln\left(\frac{j+\frac23}{j-\frac13}\right).
\label{eq:log-estimates}
\end{equation}
\end{lemma}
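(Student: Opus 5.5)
We treat each inequality as a statement about a single function of $j$ and prove it by calculus. Writing the arguments of the logarithms as ratios, we have
\[
\frac{j+\frac13}{j-\frac23}=\frac{3j+1}{3j-2},\qquad \frac{j+\frac23}{j-\frac13}=\frac{3j+2}{3j-1}.
\]
We introduce the real variable $t=j\geq 1$ and the two functions
\[
f(t)=\ln\frac{3t+1}{3t-2}-\frac1t,\qquad g(t)=\frac1t-\ln\frac{3t+2}{3t-1}.
\]
The goal is then $f(t)>0$ and $g(t)>0$. Both functions tend to $0$ as $t\to\infty$, since each logarithm is $\frac1t+O(t^{-2})$. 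So it suffices to show that each function is strictly decreasing on $[1,\infty)$: a strictly decreasing function with limit $0$ at infinity is positive everywhere on the interval.

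\textbf{Derivative of $f$.} Differentiating gives
\[
f'(t)=\frac{3}{3t+1}-\frac{3}{3t-2}+\frac1{t^2}=-\frac{9}{(3t+1)(3t-2)}+\frac{1}{t^2}.
\]
Since $(3t+1)(3t-2)=9t^2-3t-2$, we get $f'(t)<0$ if and only if $9t^2-3t-2<9t^2$. This holds for all $t>0$.

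\textbf{Derivative of $g$.} Similarly,
\[
g'(t)=-\frac1{t^2}+\frac{9}{(3t+2)(3t-1)}.
\]
Since $(3t+2)(3t-1)=9t^2+3t-2$, we get $g'(t)<0$ if and only if $9t^2<9t^2+3t-2$, that is, $t>2/3$. This holds on $[1,\infty)$.

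\textbf{Conclusion.} Both functions are strictly decreasing on $[1,\infty)$ and tend to $0$, so both are strictly positive there. Specializing to integer $t=j$ gives \eqref{eq:log-estimates}.

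\textbf{Expected difficulty.} The only step needing care is confirming the limits at infinity and that the domain avoids the singularities at $t=2/3$ and $t=1/3$, which it does for $t\geq 1$.

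As a fallback, one could expand $\ln\frac{1+u}{1-u}=2(u+u^3/3+\dots)$ with $u=\frac{3}{6j-1}$ and $u=\frac{3}{6j+1}$ respectively, and compare the result with $1/j$. The derivative argument is simpler and suffices.
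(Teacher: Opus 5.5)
Your proof is correct and is essentially the paper's argument: the paper substitutes $x=1/j$ and shows that $F(x)=\ln\bigl(\frac{3+x}{3-2x}\bigr)-x$ and $G(x)=x-\ln\bigl(\frac{3+2x}{3-x}\bigr)$ vanish at $x=0$ and are increasing on $(0,1]$. Under $t=1/x$ this is exactly your ``strictly decreasing on $[1,\infty)$ with limit $0$ at infinity'' argument, and your derivative sign conditions $3t+2>0$ and $3t>2$ correspond to $3x+2x^2>0$ and $3x-2x^2>0$ on the paper's side.
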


\begin{proof}
    Setting $x=1/j$, the first inequality is equivalent to
\begin{equation*}
x<\ln\left(\frac{1+\frac{x}{3}}{1-\frac{2x}{3}}\right).
\end{equation*}
This inequality, follows from the fact that the function
\begin{equation*}
F(x)=\ln\left(\frac{3+ x}{3-2x}\right)-x
\end{equation*}
satisfies $F(0)=0$ and $F'(x)>0$ for $0<x\leq 1$. 

Similarly, the second inequality follows from considering the function
\begin{equation*}
G(x)=x-\ln\left(\frac{3 + 2x }{3- x }\right),
\end{equation*}
that satisfies $G(0)=0$ and $G'(x)>0$ for $0<x\leq 1$.

\end{proof}

\begin{theorem}\label{main_result}
Let $p/q$ be a convergent of the simple continued fraction of $1/e$ with
$q\geq 3$. In the secretary problem with $q$ applicants, the optimal threshold
$r$, denoting the number of applicants rejected unconditionally at the beginning,
is precisely $p$.
\end{theorem}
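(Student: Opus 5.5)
The plan is to verify the characterization \eqref{eq:criterion} directly with $n=q$ and $r=p$. That is, we show
\begin{equation*}
\sum_{j=p+1}^{q-1}\frac1j\leq 1\leq \sum_{j=p}^{q-1}\frac1j .
\end{equation*}
Both sums will be compared with logarithms using the estimates \eqref{eq:log-estimates}, which telescope. The approximation property \eqref{eq:rationa_approx} then controls the quantity $q-ep$ that measures how close $p/q$ is to $1/e$. We first note that for $q\geq 3$ the convergents are $\frac13,\frac38,\frac4{11},\dots$, so $p\geq 1$ and both sums make sense. Moreover, the next denominator $q'$ satisfies $q'\geq 8$.

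\textbf{Lower bound.} Apply the second inequality of \eqref{eq:log-estimates} to each term $j=p,\dots,q-1$. The logarithms telescope, since the numerator $j+\frac23$ of the $j$-th factor equals the denominator of the $(j+1)$-th factor. This gives
\begin{equation*}
\sum_{j=p}^{q-1}\frac1j>\ln\left(\frac{q-\frac13}{p-\frac13}\right).
\end{equation*}
So it suffices to show $q-\frac13\geq e\left(p-\frac13\right)$, that is, $q-ep\geq -\frac{e-1}{3}$.

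\textbf{Upper bound.} Apply the first inequality of \eqref{eq:log-estimates} to $j=p+1,\dots,q-1$; it again telescopes. This gives
\begin{equation*}
\sum_{j=p+1}^{q-1}\frac1j<\ln\left(\frac{q-\frac23}{p+\frac13}\right).
\end{equation*}
So it suffices to show $q-ep\leq \frac{e+2}{3}$.

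\textbf{Controlling $q-ep$.} By \eqref{eq:rationa_approx},
\begin{equation*}
|q-ep| = eq\left|\frac1e-\frac pq\right| < \frac{e}{q'},
\end{equation*}
where $q'$ is the denominator of the next convergent. Since $q'\geq 8$, we get $|q-ep|<e/8<0.34$. This is smaller than both $\frac{e-1}{3}\approx 0.57$ and $\frac{e+2}{3}\approx 1.57$, so both required inequalities hold, which proves \eqref{eq:criterion}.

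For uniqueness, the paper's remark that harmonic sums of consecutive integers are never integers (for $n=q\geq 3$) makes the optimum unique. Alternatively, both of our inequalities are already strict. Hence $r(q)=p$.

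The only delicate point is the lower bound, whose tolerance $\frac{e-1}{3}$ is the tighter one. It requires $q'>3e/(e-1)\approx 4.75$. This explains the hypothesis $q\geq 3$: the case $q=2$ with $q'=3$ fails the estimate, and the problem is genuinely degenerate there. I would check the first few convergents explicitly to confirm $q'\geq 8$, and then use the monotone growth of denominators from \eqref{eq:recurrence}.
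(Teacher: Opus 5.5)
Your proposal is correct and follows the paper's proof essentially step for step. You use the same two logarithmic estimates, telescoped to $\ln\bigl((q-\frac23)/(p+\frac13)\bigr)$ and $\ln\bigl((q-\frac13)/(p-\frac13)\bigr)$, together with $|q/e-p|<1/q'\le 1/8$, which you merely rescale by $e$ to $|q-ep|<e/8$; your remarks on $p\ge 1$, on strictness giving uniqueness, and on why $q=2$ is excluded are consistent with the paper.
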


\begin{proof}
We have seen that $r$ is the unique optimal threshold if and only if,
\begin{equation}
\sum_{j=r+1}^{n-1}\frac1j<1
\qquad\text{and}\qquad
\sum_{j=r}^{n-1}\frac1j>1.
\label{eq:optimality}
\end{equation}

We shall prove that these two inequalities hold when $n=q$ and $r=p$, for
every convergent $p/q$ of $1/e$ with $q\geq 3$.

Using the first inequality in \eqref{eq:log-estimates}, we get
\begin{equation}
\sum_{j=p+1}^{q-1}\frac1j
<
\sum_{j=p+1}^{q-1}
\ln\left(\frac{j+\frac13}{j-\frac23}\right) = \ln\left(\frac{q-\frac23}{p+\frac13}\right),
\label{eq:upper-harmonic-bound}
\end{equation}
where in the last equality we used that the product $ \prod_{j=p+1}^{q-1}
\left(\frac{j+\frac13}{j-\frac23}\right) $ telescopes.

Similarly,
\begin{equation}
\sum_{j=p}^{q-1}\frac1j
>
\sum_{j=p}^{q-1}
\ln\left(\frac{j+\frac23}{j-\frac13}\right)
=
\ln\left(\frac{q-\frac13}{p-\frac13}\right).
\label{eq:lower-harmonic-bound}
\end{equation}

If $p/q$ is a convergent of $1/e$, and $q'$ is the denominator of the next
convergent, then, using \eqref{eq:rationa_approx},
\begin{equation*}
\left|\frac qe-p\right|<\frac1{q'}.
\label{eq:cf-error}
\end{equation*}
For the convergents of $1/e$ with $q\geq 3$, the next denominator satisfies
$q'\geq 8$. Therefore
\begin{equation*}
- \frac18 < \frac{q}{e} - p <\frac18,
\label{eq:error-eighth}
\end{equation*}
which implies
\begin{equation}
-\frac{e-1}{3e}<\frac qe-p<\frac{e + 2}{3e}.
\label{eq:error-window}
\end{equation}
The right-hand inequality in \eqref{eq:error-window} is equivalent to
\begin{equation*}
\frac{q-\frac23}{p+\frac13}<e,
\end{equation*}
and together with \eqref{eq:upper-harmonic-bound}, gives
\begin{equation*}
\sum_{j=p+1}^{q-1}\frac1j<1.
\end{equation*}
The left-hand inequality in \eqref{eq:error-window} is equivalent to
\begin{equation*}
\frac{q-\frac13}{p-\frac13}>e,
\end{equation*}
and together with \eqref{eq:lower-harmonic-bound}, gives
\begin{equation*}
\sum_{j=p}^{q-1}\frac1j>1.
\end{equation*}
Thus the two optimality inequalities \eqref{eq:optimality} hold for $r=p$ and $n=q$. Hence
$r=p$ is the unique optimal threshold for every convergent $p/q$ of $1/e$
with $q\geq 3$.

\end{proof}

If we interpret the first convergent of $1/e$ as $0/1$, then for every convergent
$p/q$ the numerator $p$ gives an optimal threshold for the secretary problem with
$q$ applicants. The only non-unique case is $q=2$, where both $r=0$ and $r=1$ succeed with probability $1/2$.

\noindent{\small\sc 4i Intelligent Insights, Tecnoincubadora Marie Curie, PCT Cartuja, 41092 Sevilla, Spain}

\noindent E-mail: {\tt \href{mailto:r.sanchez@4i.ai}{r.sanchez@4i.ai}}

\end{document}